\documentclass{tsp-short}

\usepackage{amsmath}
\usepackage{amssymb}
\usepackage{amsthm}

\usepackage{float}
\usepackage{graphicx}
\usepackage{xcolor}
\usepackage[colorlinks = true,
linkcolor = blue,
urlcolor = blue,
citecolor = blue,
anchorcolor = blue]{hyperref}

\newcommand{\rd}{\textcolor{red}}

\theoremstyle{plain}
\newtheorem{theorem}{Theorem}
\newtheorem*{theorem*}{Theorem}
\newtheorem{lemma}{Lemma}
\newtheorem{corollary}{Corollary}
\theoremstyle{definition}
\theoremstyle{remark}
\newtheorem{remark}{Remark}


\newcommand{\be}{\begin{equation}}
    \newcommand{\ee}{\end{equation}}
    
\newcommand{\bes}{\begin{equation*}}
    \newcommand{\ees}{\end{equation*}}
    
\newcommand{\ninf}{n\to\infty}
\newcommand{\as}{\text{a.s.}}
\newcommand{\weakto}{\overset{w}{\to}}

\usepackage{mathtools}

\DeclarePairedDelimiter\ceil{\lceil}{\rceil}
\DeclarePairedDelimiter\floor{\lfloor}{\rfloor}
\DeclarePairedDelimiter\p{(}{)}
\DeclarePairedDelimiter\event{\Big\{}{\Big\}}
\DeclarePairedDelimiter\Prob{\mathbb P\Big(}{\Big)}

\newcommand{\ve}{\varepsilon}

\newcommand{\mbE}{{\mathbb E}}

\newcommand{\mbP}{{\mathbb P}}

\newcommand{\Ce}{\mathrm{C}[0,1]}

\newcommand{\De}{\mathcal{D}[0,1]}

\newcommand{\supp}{\supp{\rm supp}}

\newcommand{\E}{\mathrm{E}}

\newcommand{\eqw}{\stackrel{d}{=}}

\newcommand{\tow}{\stackrel{w}{\to}}
\newcommand{\toP}{\stackrel{\mbP}{\to}}

\newcommand{\mbZ}{\mathbb{Z}}

\definecolor{hcolor}{rgb}{0.0, 0.0, 0.0}
\begin{document}

\title
[On a limit behaviour of a random walk penalised in the lower half-plane]
{On a limit behaviour of a random walk penalised in the lower half-plane}

\author{A. Pilipenko}
\address{Institute of Mathematics, National Academy of Sciences of Ukraine, 3 Tereshchenkivska str., 01601, Kyiv, Ukraine; National Technical University of Ukraine ``Igor Sikorsky Kyiv Polytechnic Institute'', Kyiv, Ukraine}
\email{pilipenko.ay@gmail.com}
\thanks{A.Pilipenko was partially supported by the Alexander von Humboldt Foundation within 
the Research Group Linkage Programme {\it Singular diffusions: analytic and stochastic approaches}, 
 the DFG Project \emph{Stochastic Dynamics with Interfaces} (PA 2123/7-1),
 and 
 the National Research
Foundation of Ukraine (project 2020.02/0014 ``Asymptotic regimes of perturbed random walks:
on the edge of modern and classical probability'').} 

\author{Ben Povar}
\address{National Technical University of Ukraine ``Igor Sikorsky Kyiv Polytechnic Institute'', Department of Physics and Mathematics, 03056, Kyiv, Ukraine, 37, Peremohy ave.}
\email{o.prykhodko@yahoo.com}

\subjclass[2000]{60F17; 60G50}
\keywords{Invariance principle, Reflected Brownian motion}

\begin{abstract}
    We consider a random walk $\tilde S$ which has different increment distributions in positive and 
negative half-planes. In the upper half-plane the increments are 
mean-zero i.i.d. with  finite variance.
In the lower half-plane we consider two cases: increments are   positive i.i.d. random variables with either
   a slowly
 varying tail or with a finite expectation. For the distributions with a slowly varying tails, 
we show that $\{\frac{1}{\sqrt n} \tilde S(nt)\}$ has no weak limit in $\De$; alternatively, 
   the weak limit   is a reflected
 Brownian motion. 
\end{abstract}

\maketitle
\section{Introduction and main results}\label{Introduction}

Let random variables $\{\xi_n\}_{n\geq1}$ be i.i.d. with a generic copy $\xi$ and $\mbE \xi = 0, \ \mbE \xi^2 = \sigma^2$; $\{\eta_n\}_{n\geq1}$ are positive i.i.d., independent from the previous ones and with a generic copy $\eta$. 
We consider a random walk $\tilde S$:
\be \label{tildeSOne}
\tilde S(0) = 0, \ \tilde S(n+1) = \begin{cases}
    \tilde S(n) + \xi_{n + 1} \ \text{if} \ \tilde S(n) > 0, \\
    \tilde S(n) + \eta_{n+1} \ \text{if} \ \tilde S(n) \leq 0.
\end{cases} 
\ee

\begin{figure}
\includegraphics[scale=0.85]{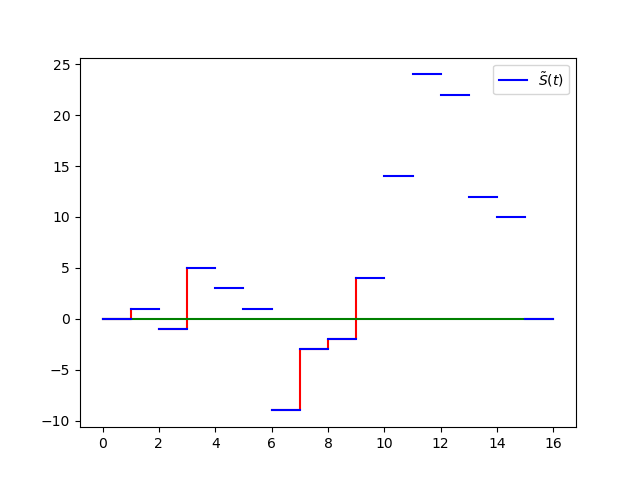}
\caption{Sample path of $\tilde S$ (up to interpolation). \rd{Red vertical lines} show when the jump is distributed as $\eta$.}
\label{TildeSFigure}
\end{figure}
    
If $\tilde S(n)\in(0, \infty)$, then the random walk has increments distributed as $\xi$ and on $(-\infty, 0]$ as $\eta$.
Jumps $\{\eta_n\}$   pushes the random walk up from the negative half-plane. This can be interpreted as some non-immediate reflection to the positive half-plane.  Similar approach 
for stochastic differential equations is called the penalization method and 
leads to a reflected diffusion if penalizations below zero are big enough, see for example 
\cite{Menaldi} or \cite[\S 1.4]{PilipenkoSkorohodProblem}.

An interpretation of this model comes from the evaporation of gas from liquid in physics, for example, a model of ballistic evaporation \cite{Johnson2014}. The usual form of speed distribution of a particle in a medium is Maxwell distribution. But in case when there is a medium change (a particle bounces off liquid) several papers have indicated that super- or sub-Maxwell distributions could apply \cite{Johnson2014}, \cite{Hahn2016}, \cite{Kann2016}. 

Another interpretation is the following.
  Consider an online shop with a warehouse. Assume that the warehouse is replenished every day (e.g. under a long-term contract) and also that the expectation of supply equals the expectation of demand. So,
 the increments of the number of goods in the warehouse are mean-zero random variables $\{\xi_k\}$ if the warehouse is non-empty. 
 In case when the virtual amount of goods in stock is negative, i.e.,  the warehouse is empty and we
 have unsatisfied orders, the shop seeks 
to refill the warehouse by buying batches of goods until it has a positive balance. 
 In our model this refilling corresponds to the sequence  $\{\eta_k\}$. If we consider a regular shop or a traditional queueing
model, then the unsatisfied orders would be discarded and the walk couldn't overjump a zero-level.
While the shortage is not overcome (which may take some time), the shop does not sell any goods. 

Our goal is to analyse the limit of the scaled sequence of the processes $\Big\{\frac{1}{\sqrt n}\tilde S(nt), \ t\in[0,1]\Big\}_{n\geq 1}$ as $n\to\infty$.
 Depending on the distribution of $\eta$ we want to distinguish two cases: 
\begin{enumerate}
    \item $\eta$ has a finite expectation:
    \[\mbE \eta < \infty.\]
    \item the distribution of $\eta$ has a slowly varying tail
    \[ \mbP (\eta > x) \sim l(x), \ l \in R_0, \]
where $R_0$ a set of slowly varying functions (check definition 1.4.2 in \cite{BGH87}).
\end{enumerate}

\begin{remark}\label{AlphaStableCaseRemark}
    Another interesting case is when the tail of $\eta$ is regularly varying, i.e., $\eta$ belongs to the domain of attraction of an $\alpha$-stable distribution. This case was studied in \cite{PilipenkoPrykhodko14} with the restriction on $\xi$ to be in $\mbZ$ and greater than or equal to $-1$. We conjecture that the limit process should be the same as in \cite{PilipenkoPrykhodko14}, although we postpone the proof for the future paper. 
\end{remark}

For  the  case  $\E\eta<\infty$ we assume that $\tilde S$ is linearly interpolated  for $t \geq 0$:  
\[ \tilde S(t) = \tilde S(\floor*{t}) + (t - \floor*{t}) \tilde S(\ceil*{t}). \]
\begin{theorem}\label{ThmFiniteExpectation}
    A sequence of processes 
    \[ \p[\Big]{\frac{\tilde S(n t)}{\sqrt n}, \ t\in [0, 1]}_{n \geq 1} \]
    converges weakly in $\Ce$ to a reflected Brownian motion:
    \[W_{\text{reflected}}(t) := W(t) - \min_{s \leq t} W(s).\]
\end{theorem}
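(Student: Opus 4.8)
The plan is to realise $\tilde S$ as an approximate solution of a discrete one-sided Skorokhod reflection problem and then to pass to the limit using the Lipschitz continuity of the reflection map. Write
\[
\tilde S(n) = M(n) + P(n), \qquad M(n) = \sum_{k=1}^n \xi_k \II\{\tilde S(k-1) > 0\}, \quad P(n) = \sum_{k=1}^n \eta_k \II\{\tilde S(k-1) \le 0\}.
\]
Since $\xi_k$ is independent of $\mathcal F_{k-1} = \sigma(\xi_1,\eta_1,\dots,\xi_{k-1},\eta_{k-1})$ and $\mbE\xi = 0$, the process $M$ is a martingale with predictable quadratic variation $\langle M\rangle_n = \sigma^2\sum_{k=1}^n \II\{\tilde S(k-1)>0\} = \sigma^2\bigl(n - N(n)\bigr)$, where $N(n)$ counts the steps spent at or below zero. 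The summand $P$ is nonnegative, nondecreasing, and increases only at steps where $\tilde S(k-1)\le 0$; it therefore plays the role of the reflecting regulator, so that $\tilde S = M + P$ is the natural candidate for the reflection of $M$.

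Next I would collect the quantitative bounds that make $\tilde S$ close to the exact reflection of $M$. Because $\eta>0$, once $\tilde S$ enters $(-\infty,0]$ it increases monotonically until it returns to $(0,\infty)$; hence the minimum over each below-zero sojourn is attained at the entry step, where $\tilde S(k) = \tilde S(k-1)+\xi_k > \xi_k$. Consequently $\inf_{k\le n}\tilde S(k) \ge -\max_{k\le n}|\xi_k| =: -\beta_n$, and $\beta_n = o_P(\sqrt n)$ since $\mbE\xi^2<\infty$. Granting the key estimate $\mbE N(n) = O(\sqrt n)$ (see below), the overshoot $\gamma_n := \max\{\eta_k : k\le n,\ \tilde S(k-1)\le 0\}$ satisfies, using that $\{\tilde S(k-1)\le 0\}$ is independent of $\eta_k$, the bound $\mbP(\gamma_n>\varepsilon\sqrt n)\le \mbE N(n)\,\mbP(\eta>\varepsilon\sqrt n)\le C\sqrt n\,\mbP(\eta>\varepsilon\sqrt n)$, which vanishes because $x\,\mbP(\eta>x)\to 0$ is a consequence of $\mbE\eta<\infty$; thus $\gamma_n = o_P(\sqrt n)$. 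In particular $N(n)/n\to 0$ in probability, so $\langle M\rangle_{\lfloor nt\rfloor}/n \to \sigma^2 t$, and the conditional Lindeberg condition follows from $\mbE[\xi^2\II\{|\xi|>\varepsilon\sqrt n\}]\to 0$; the martingale invariance principle then yields $M(\lfloor n\cdot\rfloor)/\sqrt n \Rightarrow \sigma W$ in $\De$.

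To conclude, let $\Phi(x)(t) = x(t) - \min_{s\le t}\min(x(s),0)$ denote the one-sided reflection map, which is Lipschitz in the uniform norm and carries continuous paths to continuous paths. Put $x_n(t)=M(\lfloor nt\rfloor)/\sqrt n$, $y_n(t)=\tilde S(\lfloor nt\rfloor)/\sqrt n$, and $\ell_n(t)=P(\lfloor nt\rfloor)/\sqrt n$, so that $y_n=x_n+\ell_n$ with $\ell_n$ nondecreasing, $y_n\ge -\beta_n/\sqrt n$, and $\ell_n$ increasing only where the pre-jump value of $y_n$ is $\le 0$, with jumps at most $\gamma_n/\sqrt n$. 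An elementary comparison with the explicit regulator $-\min_{s\le t}\min(x_n(s),0)$ of $\Phi(x_n)$ gives $\|y_n-\Phi(x_n)\|_\infty \le 2(\beta_n+\gamma_n)/\sqrt n \to 0$ in probability. Combining this with $x_n\Rightarrow \sigma W$, the continuous mapping theorem applied to $\Phi$, and Slutsky's lemma yields $y_n\Rightarrow \Phi(\sigma W) = \sigma\bigl(W(t)-\min_{s\le t}W(s)\bigr)$; since the maximal jump of $y_n$ is $o_P(1)$ and the limit is continuous, the convergence upgrades to $\Ce$ for the linearly interpolated process, which is the assertion (with $\sigma$ absorbed into $W$).

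The main obstacle is the occupation-time estimate $\mbE N(n)=O(\sqrt n)$, i.e.\ that the walk spends only $O(\sqrt n)$ steps at or below zero. I would derive it from $\mbE N(n) = \sum_{k<n}\mbP(\tilde S(k)\le 0)$ together with the local-time type bound $\mbP(\tilde S(k)\le 0) = O(1/\sqrt k)$: each below-zero sojourn has finite expected length (a renewal estimate using $\mbE\eta<\infty$ and $\mbE|\xi|<\infty$ to control the number of $\eta$-jumps needed to overcome an entry deficit), while the number of down-crossings of zero grows like $\sqrt n$, exactly as for the unperturbed mean-zero walk. Making the $O(1/\sqrt k)$ decay rigorous for the \emph{perturbed} walk, rather than for a genuine reflected walk, is the delicate point, and is where I expect the bulk of the technical effort to lie.
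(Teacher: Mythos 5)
Your overall strategy (the martingale decomposition $\tilde S = M + P$, the martingale FCLT for $M$, and the approximate-Skorokhod comparison $\|y_n-\Phi(x_n)\|_\infty \le 2(\beta_n+\gamma_n)/\sqrt n$) is sound and genuinely different from the paper's, which instead works with the representation $\tilde S(n)=S_\xi(n-T(n))+S_\eta(T(n))$ on a Skorokhod-representation space and squeezes $S_\eta(T(nt))/\sqrt n$ between two quantities converging to $-\min_{s\le t}W(s)$. However, your proof has a genuine gap exactly where you flag it: the occupation-time estimate $\mbE N(n)=O(\sqrt n)$ is never proved, and it is the load-bearing input for everything else --- it gives $\gamma_n=o_P(\sqrt n)$ (hence the reflection comparison) and $N(n)/n\to 0$ (hence $\langle M\rangle_{\lfloor nt\rfloor}/n\to\sigma^2 t$, needed for the martingale FCLT). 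The route you sketch for it, $\mbP(\tilde S(k)\le 0)=O(1/\sqrt k)$, is a local-limit-type statement for the \emph{perturbed} walk, and the heuristic ``finite expected sojourn length times $\sqrt n$ down-crossings'' does not close easily: controlling the number of down-crossings up to time $n$ is essentially the same problem as controlling $N(n)$ itself, since excursions of a mean-zero walk above $0$ have \emph{infinite} expected duration, so one cannot divide $n$ by an expected excursion length. As written, the proof is incomplete.

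The missing estimate has an elementary pathwise proof, which is precisely the paper's key device (its Lemma 1): because $\eta>0$ and the walk re-enters $(0,\infty)$ as soon as the accumulated $\eta$-jumps of the current sojourn exceed the entry deficit, one has for every $n$
\[
S_\eta(T(n)-1) \;\le\; -\min_{k\le n-T(n)} S_\xi(k),
\]
for otherwise the walk would already have been positive one step before its last regulated step. The right-hand side is $O_P(\sqrt n)$ by Donsker's theorem, while $S_\eta(m)/m\to\mbE\eta>0$ a.s., so $T(n)=O_P(\sqrt n)$ follows immediately --- no renewal theory or local-limit estimates. Note also that a bound in probability suffices for your argument: instead of the union bound $\mbE N(n)\,\mbP(\eta>\varepsilon\sqrt n)$, intersect first with $\{N(n)\le K\sqrt n\}$ and observe that the $\eta$'s attached to regulated steps form an i.i.d. sequence (the event $\{\tilde S(k-1)\le 0\}$ is measurable with respect to the pre-$k$ $\sigma$-field), so on that event $\gamma_n$ is dominated by the maximum of $\lceil K\sqrt n\rceil$ i.i.d. copies of $\eta$, and $K\sqrt n\,\mbP(\eta>\varepsilon\sqrt n)\to 0$ since $\mbE\eta<\infty$; this is exactly how the paper handles the corresponding terms. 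With this repair your argument goes through and constitutes a legitimate alternative to the paper's proof.
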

\begin{remark}
    It is well-known that $W_{\text{reflected}}$ has the same distribution as the absolute value of a Brownian motion $|W|$, see for example Theorem 1.3.2 in \cite{PilipenkoSkorohodProblem}.
\end{remark}

For the second case we assume $\tilde S$ has a flat-right interpolation for $t\geq 0$:  
\[ \tilde S(t) = \tilde S(\floor*{t}). \]
\begin{theorem}\label{ThmLTails}
    For a sequence of processes 
    \[ \p[\Big]{\frac{\tilde S(n t)}{\sqrt n}, \ t\in[0,1]}_{n \geq 1} \]
    there is no weak limit in $\De$. Moreover for any $t > 0$
    \be \label{MaxTildeS} \max_{s\leq t} \frac{\tilde S(n s)}{\sqrt n} \toP \infty, \ \ninf. \ee
\end{theorem}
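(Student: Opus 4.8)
The plan is to deduce the non-existence of a weak limit from \eqref{MaxTildeS}, and then to prove \eqref{MaxTildeS} itself via the excursion/renewal structure of $\tilde S$. For the reduction, note that if $\tilde S(n\cdot)/\sqrt n$ had a weak limit $X$ in $\De$, then, since the functional $x\mapsto\sup_{s\le t}x(s)$ is continuous on $\De$ in the $J_1$ topology, the continuous mapping theorem would force $\max_{s\le t}\tilde S(ns)/\sqrt n\weakto\sup_{s\le t}X(s)$, a finite random variable, and in particular the family would be tight. This contradicts \eqref{MaxTildeS}, so it suffices to prove \eqref{MaxTildeS}. Fix $M>0$; since enlarging $M$ only strengthens the event, I may take $M$ large. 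Decompose the path into excursions: let $0=\theta_0<\theta_1<\dots$ be the successive instants at which a reflecting $\eta$-jump is applied, let $D_k=\theta_k-\theta_{k-1}$, and let $\nu_{nt}=\#\{k:\theta_{k-1}<nt\}$ be the number of reflections before time $nt$. The walk overshoots level $M\sqrt n$ as soon as some $\eta_k$ with $k\le\nu_{nt}$ exceeds $M\sqrt n$ (the $O(1)$ undershoot below $0$ is tight and negligible at scale $\sqrt n$), so it is enough to show $\mbP(\exists\,k\le\nu_{nt}:\eta_k>M\sqrt n)\to1$.

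The engine is a single expectation combined with slow variation. The reflecting jump initiating the $k$-th excursion is an independent copy of $\eta$ and is independent of the elapsed time $\theta_{k-1}$ (the regenerative structure at reflection instants), whence
\[
\mbE\,\#\{k\le\nu_{nt}:\eta_k>M\sqrt n\}=\mbP(\eta>M\sqrt n)\,\mbE\,\nu_{nt}=l(M\sqrt n)\,\mbE\,\nu_{nt}.
\]
To evaluate $\mbE\,\nu_{nt}$ I would first establish $\mbP(D>s)\sim l(\sqrt s)$: conditionally on $\eta=h$ the first-passage tail $\mbP(\tau_h>s)$, after the rescaling $s\mapsto h^2s$, converges to the first-passage law of Brownian motion, and integrating this against the law of $\eta$ (an integration by parts against the increasing profile $g(u)=\mbP(\tau^{\mathrm{BM}}_u>1)$, which has total mass $g(\infty)-g(0)=1$, together with the uniform convergence theorem for $R_0$) leaves precisely $l(\sqrt s)$. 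Thus $D$ has a slowly varying, infinite-mean tail, and the renewal theorem in the infinite-mean regime (Erickson) gives $\mbE\,\nu_{nt}\sim 1/\mbP(D>nt)\sim 1/l(\sqrt{n})$. Because $l\in R_0$ forces $l(M\sqrt n)/l(\sqrt n)\to1$ for every fixed $M$ (and $t$), the displayed expectation tends to $1$.

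It remains to upgrade ``expected number $\to1$'' to ``at least one occurs with probability $\to1$''. The key structural fact is that two record jumps exceeding $M\sqrt n$ cannot both fall in $[0,nt]$ except on a rare event: a jump to height $>M\sqrt n$ starts an excursion of length of order $M^2n\gg nt$ with probability tending to $1$ as $M\to\infty$, and hence monopolises the entire time budget. Consequently the counting variable is $0/1$-valued up to $o(1)$, its second moment also tends to $1$, and Chebyshev's inequality yields $\mbP(\exists\,k\le\nu_{nt}:\eta_k>M\sqrt n)\to1$. This gives $\max_{s\le t}\tilde S(ns)/\sqrt n\toP\infty$, which is \eqref{MaxTildeS}.

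The hard part is the borderline nature of the balance. The number of reflections, of order $1/l(\sqrt n)$, is exactly the reciprocal of the per-reflection probability $l(M\sqrt n)$ of a record jump, so the product sits exactly at the threshold value $1$; everything therefore hinges on the two asymptotic constants being exactly one — the constant $1$ in $\mbP(D>s)\sim l(\sqrt s)$ and the constant $1$ in Erickson's $\mbE\,\nu_{nt}\sim 1/\mbP(D>nt)$ — which is precisely where the uniform-convergence and Karamata theory for slowly varying functions is indispensable. (Had the tail of $\eta$ been regularly varying with a positive index, the identical computation would instead yield a finite, $M$-dependent limit strictly below $1$, and a genuine reflected $\alpha$-stable weak limit, consistent with Remark \ref{AlphaStableCaseRemark}.) The second delicate point is the no-clumping step, namely bounding the second moment of the count uniformly in $n$ so that the expectation-to-probability upgrade goes through.
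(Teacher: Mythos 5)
Your overall skeleton (reduce to \eqref{MaxTildeS} via tightness, then force one giant $\eta$-jump before time $nt$) is sound, and your route is genuinely different from the paper's, but as written it has a real gap at its load-bearing step. Your renewal computation $\mbE\,\#\{k\le\nu_{nt}:\eta_k>M\sqrt n\}=l(M\sqrt n)\,\mbE\,\nu_{nt}$ is fine (the event $\{\theta_{k-1}<nt\}$ is independent of $\eta_k$), but the evaluation $\mbE\,\nu_{nt}\sim 1/\mbP(D>nt)$ treats the excursion durations $D_k$ as an i.i.d.\ renewal sequence, and they are not: each excursion is launched from the undershoot position left in $(-\infty,0]$ by the previous one (and a ``reflection phase'' may consist of several $\eta$-jumps), so $(D_k)$ is a Markov-dependent sequence whose one-step law depends on that undershoot. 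The asymptotic $U(t)\,\mbP(D>t)\to 1$ for slowly varying tails is a theorem about i.i.d.\ renewals (and is the relevant statement here -- Erickson's strong renewal theorems concern regularly varying tails of index $\alpha\in(1/2,1]$); for the dependent sequence you would need, at minimum, the tail asymptotics $\mbP(D_k>s\mid \text{undershoot}=p)\sim l(\sqrt s)$ uniformly in $p$ on a set of overwhelming probability, plus a Markov-renewal version of $U\sim 1/\overline F$. This matters precisely because, as you yourself observe, your argument sits exactly at threshold: you need $l(M\sqrt n)\,\mbE\,\nu_{nt}\to 1$ with constant exactly $1$, so any loss of a constant in the renewal step is fatal rather than cosmetic. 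The same dependence issue infects your no-clumping step: the bound $\mbE[X_n(X_n-1)]\to 0$ (which is what the ``Chebyshev'' upgrade really requires, via $\mbP(X_n\ge 1)\ge \mbE X_n-\mbE[X_n(X_n-1)]$) needs conditional renewal estimates after a big excursion ends prematurely, again for the non-i.i.d.\ sequence. A minor further point: the undershoots are not uniformly ``tight $O(1)$'' over the $\sim 1/l(\sqrt n)$ excursions; what you need (and what is true, since $\mbE\xi^2<\infty$) is $\max_{i\le n}|\xi_i|=o_{\mbP}(\sqrt n)$, which dominates every undershoot.

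It is instructive to see how the paper avoids all of this: it never computes the number or the lengths of excursions. Lemma \ref{lem:1} is a purely pathwise observation: if $\min_{i\le nt/2}S_\xi(i)\le-\ve\sqrt n$ and $T(n)<nt/2$ (the latter from Corollary \ref{TConv0}), then the walk $S_\eta$ must cross the level $\ve\sqrt n$ within its first $T(nt)$ steps. Rogozin's theorem (Theorem 8.8.2 in \cite{BGH87}) is then applied to the genuinely i.i.d.\ sum $S_\eta$ at its first passage over $\ve\sqrt n$, giving a single jump exceeding $(K+c)\sqrt n$ with probability tending to $1$; the Donsker bound $\min_{i\le n}S_\xi(i)\ge-K\sqrt n$ transfers this to $\tilde S$ through \eqref{eq2}. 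Slow variation enters only through Rogozin's overshoot theorem, no exact renewal constants are needed, and the i.i.d.\ structure is used only where it actually holds. Your approach could likely be completed, and its ``threshold'' heuristic nicely explains why a proper limit reappears for regularly varying tails (Remark \ref{AlphaStableCaseRemark}), but as a proof it currently rests on a citation that does not apply to the object you built.
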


Our results show that intuitively understandable things happen: when $\eta$ has an expectation, the process behaves like a Brownian motion with a reflection; otherwise when $\eta$ has a slowly varying tail, the process blows up at the beginning. 

 Note that model \eqref{tildeSOne} is equivalent (up to recounting of $\xi_i$ and $\eta_i$) to the following 
\be \label{tildeS}
\tilde S(0) = 0, \ \tilde S(n) = \sum_{i=1}^{n-T(n)} \xi_i + \sum_{i=1}^{T(n)}\eta_i, \ n \geq 1, 
\ee
where $T(n)$ is the number of visits to $(-\infty, 0]$ before the time $n$:
\be \label{eq0}
 T(n) = \#\{ k < n : \tilde S(k) \leq 0\}, \ n\geq 1. 
 \ee
 We work only with this representation in the paper further.

Denote
\be S_\xi(n) = \sum_{i=1}^n\xi_i, \ S_\eta(n) = \sum_{i=1}^n\eta_i. \ee
So
\be\label{eq2}
\tilde S(n) = S_\xi(n - T(n)) + S_\eta(T(n)).
\ee

We set by definition $S_\xi(t):= S_\xi(\floor*{t}), \ S_\eta(t):= S_\eta(\floor*{t}), T(t):= T(\floor*{t})$ for all $t\geq0.$

Although the cases we discuss are very different, we tried to pick out common traits in
 Section \ref{SharedPart}. 

In Section \ref{FirstPart} we prove Theorem \ref{ThmFiniteExpectation}. We aim to show that $\frac{1}{\sqrt n}S_\eta(T(nt))$ converges to $- \min_{s \leq t} W(s)$ and do so by showing that the latter lies neatly between $\frac{1}{\sqrt n}S_\eta(T(n t) - 1)$ and $\frac{1}{\sqrt n}S_\eta(T(n t)) + \frac{1}{\sqrt n}\max_{i \leq nt} |\xi_i|$. 

The proof of the second case can be found in Section \ref{SecondPart} and is based on the 
observation that the overshoots above level $n$ of the random walk $S_\eta$ are much larger than $n$. This is a known property, proved by Rogozin, see for example Theorem 8.8.2  
in \cite{BGH87}. Thus we need to show that this overshoot happens before $T(n)$ because only 
in this way it could affect $\tilde S$. 

\section{Proof: shared part}\label{SharedPart}
Let $m$ be a (sign flipped) running minimum of $S_\xi$:
\be\label{eq1}
m(n) = - \min_{k \leq n}  S_\xi(k). 
\ee



\begin{lemma}\label{twofold-ineq-lemma}
    For each $n\geq 1$
    \be \label{twofold-ineq}
    S_\eta(T(n) - 1) \leq m(n - T(n)) < S_\eta(T(n)) + \max_{i \leq n} |\xi_i|
    \ee
\end{lemma}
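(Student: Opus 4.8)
The plan is to read both inequalities directly off the decomposition \eqref{eq2} by treating $T(k)$ and $N(k):=k-T(k)$ as two independent ``clocks'': $T(k)$ counts the $\eta$-jumps and $N(k)$ the $\xi$-jumps used up to time $k$, so that $\tilde S(k)=S_\xi(N(k))+S_\eta(T(k))$ for every $k$, with both $T$ and $N$ non-decreasing and increasing by exactly one at each $\eta$- resp.\ $\xi$-step. The whole argument then reduces to evaluating this identity at two well-chosen times, with no genuine estimation.

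For the lower bound I would look at the last visit to the lower half-plane before time $n$, namely $\ell:=\max\{k<n:\tilde S(k)\le 0\}$, which exists because $\tilde S(0)=0$. Since $\ell$ is the $T(n)$-th such visit, $T(\ell)=T(n)-1$, and the step from $\ell$ to $\ell+1$ is the $T(n)$-th $\eta$-jump. Plugging into \eqref{eq2}, $\tilde S(\ell)=S_\xi(N(\ell))+S_\eta(T(n)-1)\le 0$, hence $S_\eta(T(n)-1)\le -S_\xi(N(\ell))$. Because $N$ is non-decreasing and $\ell\le n$, we have $N(\ell)\le N(n)=n-T(n)$, so $-S_\xi(N(\ell))\le \max_{0\le j\le n-T(n)}(-S_\xi(j))=m(n-T(n))$, which is the left inequality. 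The degenerate case $T(n)=1$ gives $\ell=0$ and is covered by $m\ge 0$.

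For the upper bound I would instead locate the $\xi$-clock at the minimum. Let $j^\ast\in\{0,\dots,n-T(n)\}$ be a point where $S_\xi$ attains its minimum over $[0,n-T(n)]$, so $m(n-T(n))=-S_\xi(j^\ast)$. If $j^\ast=0$ the bound is immediate since $m(n-T(n))=0<S_\eta(T(n))$ (recall $T(n)\ge 1$ and $\eta>0$). If $j^\ast\ge 1$, then, since $N$ increases by unit steps from $0$ to $n-T(n)\ge j^\ast$, there is a real time $k\le n$ at which the $j^\ast$-th $\xi$-jump is applied, i.e.\ $N(k)=j^\ast$ and $\tilde S(k-1)>0$. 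Evaluating \eqref{eq2} at $k$ gives $m(n-T(n))=-S_\xi(j^\ast)=S_\eta(T(k))-\tilde S(k)$. Using $T(k)\le T(n)$ together with $\tilde S(k)=\tilde S(k-1)+\xi_{j^\ast}>-|\xi_{j^\ast}|\ge -\max_{i\le n}|\xi_i|$, we obtain $m(n-T(n))\le S_\eta(T(n))-\tilde S(k)<S_\eta(T(n))+\max_{i\le n}|\xi_i|$, which is the right inequality.

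The hard part will be none of the analysis but the bookkeeping of the three time scales — real time $k$, the $\xi$-clock $N(k)$, and the $\eta$-clock $T(k)$ — and in particular verifying that the two special times ($\ell$ and the instant of the $j^\ast$-th $\xi$-jump) exist and carry the stated values of $T$ and $N$. Once those are pinned down, both inequalities fall out of \eqref{eq2}, the strictness on the right coming precisely from the strict inequality $\tilde S(k-1)>0$ that characterises a $\xi$-step.
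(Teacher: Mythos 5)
Your proof is correct and follows essentially the same route as the paper's: the left inequality comes from examining the last visit to $(-\infty,0]$ before time $n$ (you argue directly where the paper argues by contradiction), and the right inequality rests on the same key observation that $\xi$-jumps are applied only from strictly positive positions, which you exploit at the minimizer $j^\ast$ while the paper proves the bound for all times $k\le n$ and then maximizes. These are only organizational differences; the decomposition \eqref{eq2} and the two special times used are the same, and your bookkeeping of the clocks $T(k)$ and $k-T(k)$ is sound.
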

\begin{proof}
    The left inequality is trivial when $T(n)=1$. Assume that $T(n) > 1$ and that for some $n \geq 1$
    \be\label{Assumption} S_\eta(T(n) - 1) > m(n-T(n)). \ee
    Take $k < n$, such that $T(n) = T(k + 1)$ and $T(k) + 1 = T(n)$. This is always possible since $T(n)>1$. 
    Thus
    \[\tilde S(k) = S_\xi(k - T(k)) + S_\eta(T(k))> -m(k-T(k)) + S_\eta(T(n) - 1). \]
    From \eqref{Assumption} we infer that $\tilde S(k) > 0$. Therefore $T(k) = T(k + 1)$ which contradicts with our choice of $k$.
    
    As for the right hand side inequality, observe that 
    $\tilde S$ jumps downwards at $k \geq 1$ only when $\tilde S(k-1) > 0$ and $S_\xi(k - T(k))$ jumps downwards. Thus for every $k \geq 1$
    \[ -\max_{i \leq k - T(k)} |\xi_i| \leq \tilde S(k) - \tilde S(k-1) < \tilde S(k) = S_\xi(k - T(k)) + S_\eta(T(k)). \]   
    Hence 
    \[ S_\eta(T(k)) + \max_{i \leq k - T(k)} |\xi_i| \geq -S_\xi(k - T(k)). \] 
    Taking maximum over $k \leq n$ of both sides of the last inequality we get 
    \[ S_\eta(T(n)) + \max_{i \leq n} |\xi_i| \geq -\min_{k \leq n} S_\xi(k - T(k)) = -\min_{k \leq n - T(n)} S_\xi(k) = m(n - T(n)). \] 
\end{proof}

By the Donsker theorem 
\[ \frac{1}{\sqrt n}S_\xi(n \cdot) \tow W(\cdot), \ \ninf, \]
meaning a weak convergence in $\Ce$. By the Skorokhod Representation theorem (Theorem 6.7 in \cite{Billingsley77})  we can construct a probability space which supports random elements $\hat S_\xi^{(n)}$ and $\hat W$ such that
\[\hat S_\xi^{(n)} \eqw S_\xi, \ \hat W \eqw W,  \ n\geq 1\]
and the uniform convergence holds
\be \label{invariance} \frac{1}{\sqrt n}\hat S_\xi^{(n)}(n t) \rightrightarrows \hat W(t) \ \text{as} \ \ninf \ \as \ee
for $t \in[0,1]$. 

To emphasize that we work on this new space we will write a $\hat{\text{hat}}$
 whenever we use random variables dependent on $\hat S_\xi^{(n)}$. 
Without loss of generality we may assume that this new probability space is reach enough and contains
a copy of the sequence  $\{\eta_k\}_{k\geq 1}$. We leave the same notation and assume that
the original sequence $\{\eta_k\}_{k\geq 1}$ belongs to  this probability space.
Using sequences $\{ \hat S_\xi^{(n)}(k)\}$ and $\{S_\eta(k)\}$ 
we construct copies   $\{\hat T^{(n)}(k)\}, \ \{ \hat{\tilde S}^{(n)}(k)\},
\ \{ \hat{m}^{(n)}(k)\}$ of $\{ T(k)\}, \ \{  {\tilde S}(k)\},  \{m (k)\}$ similarly to formulas \eqref{eq0}, \eqref{eq1},
 \eqref{eq2}.
 Note that in this constructions sequences
 $\{\hat S_\xi^{(n)}(k)\}$ depend  on $n$ but  $\{S_\eta(k)\}$
 does not. Often we will leave out indexing by $n$, that is $\hat S_\xi^{(n)}$ becomes $\hat S_\xi$. 
 
It follows from inequality \eqref{twofold-ineq} that 
\be \label{SetaM} S_\eta(\hat T(n t) - 1) \leq \hat m(n t)\ \as \ee
for all $t \geq 0$.

Divide both sides by $\sqrt n$. By \eqref{invariance} we have 
\be \label{right-conv}
    \forall \ve_1 > 0 \ \exists n_1 \ \forall n > n_1 \quad
    \frac{1}{\sqrt n} S_\eta(\hat T(n t) - 1)  \leq -\min_{s \leq t} \hat W(s) + \ve_1\ \as
\ee
Note that $\lim_{k\to\infty}\hat T(k) =+\infty$ a.s.
because $\liminf_{k\to\infty}S_\xi(k)=-\infty$
a.s. Thus 
\be \label{right-conv-2}
    \forall \ve_1 > 0 \ \exists n'_1 \ \forall n > n'_1 \quad
    \frac{\hat T(nt) - 1}{\sqrt n}  \leq \frac{\hat T(nt) - 1}{S_\eta(\hat T(n t) - 1)} (-\min_{s \leq t}\hat W(s) + \ve_1) \ \as
\ee

The strong law of the large numbers for $S_\eta$  implies 
\be\label{pre_l2} 
\limsup_{\ninf} \frac{\hat T(n t)}{\sqrt n} \leq \frac{-\min_{s \leq t}\hat W(s)}{\mbE\eta}  \ \as
\ee 
When $\mbE \eta = \infty$, the right hand side is $0$.
 
\begin{corollary}\label{TConv0}
    \[ \sup_{0 \leq t \leq 1} \frac{T(n t)}{n} \toP 0, \ \ninf. \]
\end{corollary}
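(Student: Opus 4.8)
The plan is to first collapse the supremum to a single time point and then feed the pointwise estimate \eqref{pre_l2} into a simple scaling argument. Since $T(n)$ counts visits to $(-\infty,0]$, the map $t\mapsto T(nt)=T(\floor*{nt})$ is non-decreasing, so the supremum over $t\in[0,1]$ is attained at $t=1$:
\[ \sup_{0\le t\le 1}\frac{T(nt)}{n}=\frac{T(n)}{n}. \]
Hence it suffices to prove the single-point statement $\frac{T(n)}{n}\toP 0$.

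Next I would transport the problem to the Skorokhod space on which \eqref{pre_l2} was established. By construction $\hat T^{(n)}(n)\eqw T(n)$ for every $n$, and since convergence in probability to a constant is determined by the one-dimensional marginal laws alone, it is enough to show $\frac{\hat T^{(n)}(n)}{n}\toP 0$. Specializing \eqref{pre_l2} to $t=1$ gives
\[ \limsup_{\ninf}\frac{\hat T^{(n)}(n)}{\sqrt n}\le \frac{-\min_{s\le 1}\hat W(s)}{\mbE\eta}=:C\ \as, \]
where $C$ is a.s. finite when $\mbE\eta<\infty$ and $C=0$ when $\mbE\eta=\infty$. In either case $\frac{\hat T^{(n)}(n)}{\sqrt n}$ has an almost surely finite $\limsup$.

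Finally, writing $\frac{\hat T^{(n)}(n)}{n}=\frac{1}{\sqrt n}\cdot\frac{\hat T^{(n)}(n)}{\sqrt n}$ and observing that the first factor tends to $0$ while the second has a finite $\limsup$, the product tends to $0$ almost surely, hence in probability. Transferring back through the equality in distribution $\hat T^{(n)}(n)\eqw T(n)$ yields $\frac{T(n)}{n}\toP 0$, which together with the monotonicity reduction proves the corollary. (Alternatively, one may bypass the a.s.\ statement and argue by tightness: the a.s.\ finite $\limsup$ makes $\frac{T(n)}{\sqrt n}$ stochastically bounded, and a tight sequence multiplied by $\frac{1}{\sqrt n}\to 0$ converges to $0$ in probability.)

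The only point needing care, rather than a genuine obstacle, is the transfer step: estimate \eqref{pre_l2} lives on the constructed (hatted) probability space, so one must explicitly invoke that $\hat T^{(n)}(n)\eqw T(n)$ and that in-probability convergence to a constant is insensitive to the joint law. Everything else is a one-line scaling estimate once the reduction $\sup_{t\le 1}T(nt)=T(n)$ is in place; the extra factor $\frac{1}{\sqrt n}$ coming from dividing by $n$ instead of $\sqrt n$ is precisely what converts the bounded quantity $\frac{T(n)}{\sqrt n}$ into something vanishing.
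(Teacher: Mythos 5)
Your proposal is correct and follows essentially the same route as the paper: reduce the supremum to $T(n)/n$ by monotonicity, transfer to the hatted space via $\hat T \eqw T$, and apply \eqref{pre_l2} with the extra factor $\frac{1}{\sqrt n}$ to conclude. The paper states this in three lines; you have merely spelled out the details (the equality-in-distribution transfer and the marginal-law argument for convergence in probability to a constant) that the paper leaves implicit.
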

\begin{proof}
    Since $\hat T$ is non-decreasing we have
    \[\sup_{0 \leq t \leq 1}\frac{\hat T(n t)}{n} \leq \frac{\hat T(n)}{n}.\]
    Recall that $\hat T \eqw T$, hence the corollary follows from \eqref{pre_l2}.
\end{proof}

\begin{lemma}\label{minW}
    The uniform convergence on $[0,1]$ holds as $\ninf$
    \[\frac{\hat m(n t -\hat T(n t))}{\sqrt n} \rightrightarrows -\min_{s \leq t}\hat W(t) \ \as \]
\end{lemma}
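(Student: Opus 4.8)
The plan is to prove Lemma~\ref{minW} by establishing the uniform convergence $\frac{1}{\sqrt n}\hat m(nt - \hat T(nt)) \rightrightarrows -\min_{s\leq t}\hat W(s)$ through a two-sided squeeze, exploiting the fact that the argument $nt - \hat T(nt)$ of the running minimum differs from $nt$ only by the negligible quantity $\hat T(nt)$. First I would recall that the invariance principle \eqref{invariance} gives the uniform convergence $\frac{1}{\sqrt n}\hat S_\xi(n\cdot) \rightrightarrows \hat W(\cdot)$ a.s.\ on $[0,1]$, and that the running-minimum functional $x \mapsto \min_{s\leq \cdot} x(s)$ is continuous on $\Ce$ in the uniform topology. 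Consequently $\frac{1}{\sqrt n}\hat m(ns) = -\min_{u\leq s}\frac{1}{\sqrt n}\hat S_\xi(nu) \rightrightarrows -\min_{u\leq s}\hat W(u)$ uniformly in $s\in[0,1]$ as well.

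The heart of the argument is to control the \emph{shift} in the time argument. I would write
\[
\frac{1}{\sqrt n}\hat m(nt - \hat T(nt)) = -\min_{k \leq nt - \hat T(nt)} \frac{\hat S_\xi(k)}{\sqrt n},
\]
and compare this with $-\min_{k\leq nt}\frac{\hat S_\xi(k)}{\sqrt n}$. Because $\hat m$ is monotone non-decreasing in its argument and $\hat T(nt)\geq 0$, the shifted minimum is squeezed between $\frac{1}{\sqrt n}\hat m(nt)$ and $\frac{1}{\sqrt n}\hat m\bigl(nt - \sup_{s\leq 1}\hat T(ns)\bigr)$. The key input is Corollary~\ref{TConv0} (equivalently its a.s.\ companion on the hat space, which follows from \eqref{pre_l2}), which guarantees $\sup_{0\leq t\leq 1}\frac{\hat T(nt)}{n} \to 0$, so that the time argument is uniformly within $o(n)$ of $nt$. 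Combined with the uniform continuity of the limit $t\mapsto -\min_{s\leq t}\hat W(s)$ on the compact $[0,1]$ (indeed $\hat W$ has a.s.\ continuous, hence uniformly continuous, paths), a perturbation of the time argument by $o(n)$ produces a perturbation of $-\min_{s\leq t}\hat W(s)$ that vanishes uniformly.

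Concretely, I would fix $\ve>0$ and $\delta>0$ such that the a.s.\ modulus of continuity of $t\mapsto-\min_{s\leq t}\hat W(s)$ over separations of size $\le\delta$ is below $\ve$; then on the event (of probability tending to one, and a.s.\ for large $n$) that $\sup_{t\leq 1}\frac{\hat T(nt)}{n}<\delta$ and that the uniform gap between $\frac{1}{\sqrt n}\hat m(n\cdot)$ and $-\min_{s\leq\cdot}\hat W(s)$ is below $\ve$, the triangle inequality yields
\[
\sup_{0\leq t\leq 1}\left|\frac{\hat m(nt-\hat T(nt))}{\sqrt n} - \Bigl(-\min_{s\leq t}\hat W(s)\Bigr)\right| \leq 2\ve
\]
for all large $n$. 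Letting $\ve\downarrow 0$ along a suitable sequence and using that the relevant events hold eventually a.s.\ gives the claimed uniform a.s.\ convergence.

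The main obstacle I anticipate is making the a.s.\ version of the time-shift control fully rigorous: Corollary~\ref{TConv0} is stated as convergence in probability for $T$, whereas the lemma demands an a.s.\ statement on the hat space. I would therefore lean on the a.s.\ bound \eqref{pre_l2}, which already gives $\limsup_n \frac{\hat T(nt)}{\sqrt n}\leq \frac{-\min_{s\leq t}\hat W(s)}{\mbE\eta}$ a.s.\ (finite when $\mbE\eta<\infty$), so that $\frac{\hat T(nt)}{n}\to 0$ a.s.\ uniformly after upgrading the pointwise bound via monotonicity of $\hat T$ (the supremum over $t\in[0,1]$ is attained at $t=1$, exactly as in the proof of Corollary~\ref{TConv0}). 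Care is needed because this uniform-in-$t$ a.s.\ control, together with the uniform continuity of the limiting path, is precisely what lets the single squeeze estimate hold simultaneously for all $t\in[0,1]$ rather than pointwise; getting the quantifiers in the right order is the delicate bookkeeping step.
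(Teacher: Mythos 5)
Your proposal is correct and takes essentially the same route as the paper: both arguments combine the uniform a.s.\ convergence of $\frac{1}{\sqrt n}\hat m(n\cdot)$ (from \eqref{invariance} and continuity of the running-minimum functional) with the uniform a.s.\ convergence $t - \frac{\hat T(nt)}{n} \rightrightarrows t$ (from \eqref{pre_l2} plus monotonicity of $\hat T$), and then pass to the composition using continuity of the limit path. The only difference is one of explicitness: the paper simply asserts that compositions of uniformly convergent functions with continuous limits converge uniformly, while you unpack that assertion into the triangle-inequality/modulus-of-continuity bookkeeping, which is exactly the content of the paper's one-line conclusion.
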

\begin{proof}
Equations \eqref{invariance} and \eqref{pre_l2}
yield the uniform convergences on $[0,1]$
     \[ 
\frac{1}{\sqrt n} \hat m (n t) =\frac{1}{\sqrt n} \hat m^{(n)} (n t)  \rightrightarrows 
-\min_{s \leq t}\hat W(t),\ \ n\to\infty,
\]
and
\[ 
    t - \frac{\hat T(n t)}{n}=t - \frac{\hat T^{(n)}(n t)}{n} \rightrightarrows t,\ \ n\to\infty,
\]
  with probability 1. 

Since $ t - \frac{\hat T(n t)}{n} \in [0,1]$ and the limit functions are continuous,
 their 
compositions a.s.  uniformly converge to the composition of limits. 
\end{proof}
\begin{remark}
In Lemma \ref{minW} we may use  both  linear and
 piecewise constant approximation of the corresponding processes.
\end{remark}
\section{Proof of Theorem \ref{ThmFiniteExpectation}}\label{FirstPart}
It suffices to prove uniform convergence over $t \in [0, 1]$ as $\ninf$ 
    \be\label{SEtaMinW} 
    \frac{1}{\sqrt n}S_\eta(\hat T(n t)) \rightrightarrows -\min_{s \leq t} \hat  W(s) \ \as \ee
    The other part of the statement of Theorem \ref{ThmFiniteExpectation} is provided by \eqref{invariance}.

From Lemma \ref{twofold-ineq-lemma} and Lemma \ref{minW} we infer that  
\begin{align}
    &\limsup_{\ninf} \frac{1}{\sqrt n} S_\eta(\hat T(n t) - 1)  \leq -\min_{s \leq t}\hat  W(s) \ \as \\
    &\liminf_{\ninf} \p[\Bigg]{ \frac{1}{\sqrt n} S_\eta(\hat T(n t)) + \frac{\max_{i \leq n}  |\hat\xi_i|}{\sqrt n} }\geq -\min_{s \leq t} \hat W(s) \ \as
\end{align}

To prove \eqref{SEtaMinW} it suffices to show 
    \be \label{MaxXi} 
    \frac{\max_{i \leq n}  |\hat \xi_i|}{\sqrt n}\toP 0, \ \ninf.\ee
    and 
    \be \label{etaT}  
    \frac{1}{\sqrt n} |S_\eta(\hat T(n t)) -  S_\eta(\hat T(n t) - 1)| = \frac{1}{\sqrt n}\eta_{\hat T(n t)}\toP 0, \ \ninf, \ee

    Observe that for a given $\delta > 0$ due to \eqref{pre_l2} it is always possible to choose $K > 0$ such that 
    \[ \mbP \p{\hat T(n) \leq K \sqrt n} > 1 - \delta. \]
    Hence both \eqref{MaxXi} and \eqref{etaT} follow from Theorem 6.2.1 in \cite{Gut2013}.



\section{Proof of Theorem \ref{ThmLTails}}\label{SecondPart}
It suffices to prove \eqref{MaxTildeS}. Set
\[N_\eta(x) = \inf\{k : S_\eta(k) > x\}, \ x \geq 0.\]
Let $c > 0$, $K > 0$, $0<\ve<K$ and $0<t\leq1$ be arbitrary real numbers, then 
\begin{align}\label{thin_road}
\begin{split}
\event{\max_{i \leq n t} \frac{\tilde S(i)}{\sqrt n} \geq c} &= \event{\max_{i \leq n t} \p[\Big]{S_\xi(i - T(i)) + S_\eta(T(i))} \geq c \sqrt n}  \\
&\supset \event{\min_{i \leq n} S_\xi(i) \geq -K \sqrt n} \cap 
\event{S_\eta(T(nt)) > (K + c) \sqrt n }  \\
&= \event{\min_{i \leq n}S_\xi(i)  \geq -K \sqrt n} \cap 
\event{N_\eta((K+c) \sqrt n) \leq T(n t)} \\
&\supset \event{\min_{i \leq n}S_\xi(i)  \geq -K \sqrt n} \cap \event{N_\eta(\ve \sqrt n) \leq T(n t)} \\ 
&\quad\quad\quad\quad\quad\quad\quad\quad\quad\quad\quad \cap \event{\eta_{N_\eta(\ve \sqrt n)} > (K + c) \sqrt n}.
\end{split}
\end{align}
For the second inclusion observe that if 
\[\eta_{N_\eta(\ve \sqrt n)} >  (K + c) \sqrt n, \]
then automatically 
\[ N_\eta( (K + c) \sqrt n) = N_\eta(\ve \sqrt n). \]

\begin{lemma}\label{lem:1}
Let $j, l, a > 0$ be real numbers such that $j + l < n$, then
\[ \event{\min_{i \leq j}S_\xi(i) \leq -a} \cap \event{T(n) < l} \subset \event{N_\eta(a) \leq T(j + l)}. \]
\end{lemma}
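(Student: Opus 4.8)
The plan is to rewrite the conclusion $N_\eta(a) \le T(j+l)$ in the equivalent form $S_\eta(T(j+l)) > a$ (valid since $S_\eta$ is non-decreasing and $N_\eta(a) = \inf\{k : S_\eta(k) > a\}$), and to extract this from the dynamics of $\tilde S$ together with the representation $\tilde S(i) = S_\xi(i - T(i)) + S_\eta(T(i))$. The two hypotheses play complementary roles: $\event{T(n) < l}$ controls how many $\eta$-increments have been used, forcing many $\xi$-increments, while $\event{\min_{i\le j} S_\xi(i) \le -a}$ pins down a deep record minimum of the embedded $\xi$-walk out of which the trajectory must climb.

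First I would count the $\xi$-increments. Since $T$ is non-decreasing and $j + l < n$, the event $\event{T(n) < l}$ gives $T(j+l) \le T(n) < l$, so the number of $\xi$-increments applied up to time $j+l$, namely $(j+l) - T(j+l)$, exceeds $j$ (the floor bookkeeping for real $j,l$ is routine and does not affect the structure). Next, on $\event{\min_{i\le j} S_\xi(i) \le -a}$ I would choose an index $i^* \le j$, necessarily $i^* \ge 1$ because $a > 0$, with $S_\xi(i^*) = \min_{i \le j} S_\xi(i) \le -a$. The step count just obtained shows that the number of $\xi$-increments applied by time $j+l$ exceeds $j \ge i^*$, so the $(i^*+1)$-th $\xi$-increment is applied at some step $s \le j+l$.

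The final step converts this crossing into a lower bound on $S_\eta$. Recall that a $\xi$-increment is applied at step $s$ only when $\tilde S(s-1) > 0$, and that at time $s-1$ exactly $i^*$ of the $\xi$-increments have been used, so the $S_\xi$-index equals $i^*$. Hence $\tilde S(s-1) = S_\xi(i^*) + S_\eta(T(s-1)) > 0$, which rearranges to $S_\eta(T(s-1)) > -S_\xi(i^*) \ge a$. Since $s - 1 < j+l$ and both $T$ and $S_\eta$ are non-decreasing, $S_\eta(T(j+l)) \ge S_\eta(T(s-1)) > a$, i.e. $N_\eta(a) \le T(j+l)$, as required.

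I expect the main point to be conceptual rather than computational: the recognition that the single bound $T(n) < l$ already guarantees the walk has climbed back above zero past its record minimum by time $j+l$. Advancing the $\xi$-index from $i^*$ to $i^*+1$ is impossible unless $\tilde S$ has returned to the positive half-plane, and returning there requires the accumulated $\eta$-mass $S_\eta$ to overcome the depth $|S_\xi(i^*)| \ge a$ of the record. Once this is seen, no separate recovery-time estimate is needed, and the only care required concerns the floor convention for the real parameters $j,l,a$.
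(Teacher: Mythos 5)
Your core mechanism is the same as the paper's: a $\xi$-increment can be applied at step $s$ only if $\tilde S(s-1)>0$, and positivity at a moment when the $\xi$-index equals $i^*$ forces $S_\eta(T(s-1))>-S_\xi(i^*)\ge a$. The gap is in the step you wave off as ``routine floor bookkeeping.'' Your counting claim --- that the number of $\xi$-increments used by time $j+l$, namely $\floor*{j+l}-T(\floor*{j+l})$, exceeds $j$ --- is false in general. Since $T$ is integer-valued, the hypothesis $T(n)<l$ with $l\notin\mbZ$ only gives $T(\floor*{j+l})\le\floor*{l}$, hence the count is only $\ge\floor*{j+l}-\floor*{l}$, which can equal $\floor*{j}=i^*$ exactly. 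Concretely: take $n=10$, $j=2$, $l=2.5$, $a=1$, with $\eta_1=3$, $\eta_2=4$, $\xi_1=-1.5$, $\xi_2=-2$, $\xi_k=0.1$ for $k\ge3$. Then $\tilde S$ takes values $0,\,3,\,1.5,\,-0.5,\,3.5,\,3.6,\dots$, so $T(10)=2<l$, $\min_{i\le2}S_\xi(i)=-3.5\le-a$, and $j+l=4.5<n$: all hypotheses hold. Here $i^*=2$, the number of $\xi$-increments used by time $\floor*{4.5}=4$ is exactly $2=j$, and the third $\xi$-increment is applied only at step $5>\floor*{j+l}$, so your argument produces no admissible step $s$. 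The lemma's conclusion still holds ($N_\eta(1)=1\le T(4)=2$), but not by your route.

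The repair is not bookkeeping; it is precisely the point where the paper's proof is one step more careful than yours. The paper tracks the first time $u+k$ at which the walk returns to the positive half-plane after the $i^*$-th $\xi$-increment, and bounds \emph{that} time inside the window via $u+k=i^*+T(u+k)\le\floor*{j}+\floor*{l}\le\floor*{j+l}$; positivity there already yields $S_\eta(T(u+k))>a$, with no need for the next $\xi$-increment to occur before $\floor*{j+l}$. Your argument requires $s=u+k+1\le\floor*{j+l}$, an off-by-one strengthening that the hypotheses cannot guarantee (in the example, $u+k=4=\floor*{j+l}$ but $s=5$). To fix your proof you must add the missing case: if the $\xi$-count at time $m=\floor*{j+l}$ equals $i^*$ exactly, show that $\tilde S(m)>0$ --- otherwise the walk stays nonpositive and accumulates $\eta$-steps, forcing either $T(n)=n-i^*\ge n-j>l$, or a first return to positivity at some $m'>m$ with $T(m')=m'-i^*<l$ and hence $m'\le\floor*{j+l}=m$, both contradictions --- and then read off $S_\eta(T(m))>a$ directly. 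With that case added, your argument becomes a correct (and essentially equivalent) rendering of the paper's proof.
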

\begin{proof} 
    Suppose $\min_{i \leq j}S_\xi(i) = -A \leq -a$ and it is firstly attained at $i = i^* \leq j$. Denote by $u$ the smallest solution of 
    $$u - T(u) = i^*.$$
    
    Let $k$ be such that  
    \[\tilde S(u) \leq \tilde S(u+1) \leq\dots\leq \tilde S(u+k-1)\leq 0 < \tilde S(u+k),\] 
    meaning that $\tilde S$ needs $k$ jumps to become positive. Observe that $T(u) + k = T(u + k)$.

    Assume $T(n) < l$. Then $k < l$. 
    Thus 
    \[u + k < j + l < n.\] 
    
    
    As $k$ is such that $\tilde S(u + k) > 0$, then from definition of $\tilde S$ we deduce that $S_\eta(T(u+k)) > A$ and so 
    \[N_\eta(a) \leq N_\eta(A) \leq T(u + k) = T(i^* + T(u) + k) = T(i^* + T(u + k)) \leq T(j + l).\]
\end{proof}

Apply Lemma \ref{lem:1} with $j=l=\frac{nt}{2}$ and $a = \ve \sqrt n$ to further inclusion \eqref{thin_road}
\be
\begin{split}\label{broader_road}
&\event{\max_{i \leq n t} \frac{\tilde S(i)}{\sqrt n} \geq c}  \\ 
\supset \event{\min_{i \leq n}S_\xi(i)  \geq -K \sqrt n} &\cap \event{N_\eta(\ve \sqrt n) \geq T(n t)} \cap \event{\eta_{N_\eta(\ve \sqrt n)} > (K + c) \sqrt n}\\
\supset \event{\min_{i \leq n}S_\xi(i)  \geq -K \sqrt n} &\cap \event{\min_{i \leq n t / 2} S_\xi(i) \leq -\ve \sqrt n} \cap \event{T(n) < \frac{nt}{2}} \\
& \cap \event{\eta_{N_\eta(\ve \sqrt n)} > (K + c) \sqrt n}.\\
\end{split}
\ee

Hence 
\begin{align}  \label{toproof}
\begin{split}  
    \Prob{\max_{i \leq n t} \frac{\tilde S(i)}{\sqrt n}\geq c} &\geq  \Prob{\text{r.h.s. of \eqref{broader_road}}} \\
    &= 1 - \Prob{\text{complement to the r.h.s. of \eqref{broader_road}}} \\
    &\geq 1 - \Prob{\min_{i \leq n}S_\xi(i)  \leq -K \sqrt n} - \Prob{\min_{i \leq n t / 2} S_\xi(i) \geq -\ve \sqrt n} \\
    &- \Prob{T(n) > \frac{nt}{2}} - \Prob{\eta_{N_\eta(\ve \sqrt n)} < (K + c) \sqrt n}.
\end{split}
\end{align}

Let $\delta > 0$ be an arbitrary small number. We proceed by proving that every negative term in the r.h.s. of the last inequality is greater than $-\delta$ for $n$ big enough.

Choose $K > 0$ and $0 < \ve < K$ so that 
\[ \Prob{|N_\eta(0, 1)| > K} < \delta, \quad \Prob{|N_\eta(0, 1)| < \ve \sqrt \frac{2}{t}} < \delta. \]
Since 
\[ -\min_{i \leq n t} \frac{S_\xi(i)}{\sqrt n} \weakto |N_\eta(0, t)|, \ \ninf, \]
we get 
\[ \limsup_{\ninf} \p[\Bigg]{ \Prob{\min_{i \leq n}S_\xi(i)  \leq -K \sqrt n} + \Prob{\min_{i \leq n t / 2} S_\xi(i) \leq -\ve \sqrt n}} \leq 2\delta. \]

 It follows from Theorem 8.8.2 in \cite{BGH87} that
\be\label{BGHTheorem882} \frac{\eta_{N_\eta(\ve \sqrt n)}}{\sqrt n} \toP \infty, \ \ninf. \ee
So the last term in the right hand side of \eqref{toproof} converges to 0.

Corollary \ref{TConv0} concludes the proof.

\end{document}